\newtheorem{thm}{Theorem}[section]
\newtheorem{cor}[thm]{Corollary}
\newtheorem{prop}[thm]{Proposition}
\newtheorem{lemma}[thm]{Lemma}
\theoremstyle{definition}
\newtheorem{nolabel}[thm]{}
\theoremstyle{remark}
\newtheorem{rem}[thm]{Remark}
\let\c@equation\c@thm
\numberwithin{equation}{section}
\DeclareMathOperator{\en}{End}
\DeclareMathOperator{\Imag}{Im}
\DeclareMathOperator{\res}{Res}
\DeclareMathOperator{\zhu}{Zhu}
\newcommand{\C}{\mathbb{C}}
\newcommand{\Z}{\mathbb{Z}}
\newcommand{\HH}{\mathcal{H}}
\newcommand{\al}{\alpha}
\newcommand{\la}{\lambda}
\newcommand{\La}{\Lambda}
\newcommand{\ov}{\overline}
\newcommand{\vac}{\mathbf{1}}
\title[A short construction of the Zhu Algebra]{A short construction of the Zhu Algebra}
\author[*]{Jethro van Ekeren}
\author[**]{Reimundo Heluani}
\begin{document}


\begin{center}
{\LARGE \bf A short construction of the Zhu Algebra} \par \bigskip

\renewcommand*{\thefootnote}{\fnsymbol{footnote}}
{\normalsize
Jethro van Ekeren\footnote{email: \texttt{jethrovanekeren@gmail.com}}\textsuperscript{1},
Reimundo Heluani\textsuperscript{2}
}

\par \bigskip

\textsuperscript{1}{\footnotesize Instituto de Matem\'{a}tica e Estat\'{i}stica (GMA), UFF, Niter\'{o}i RJ, Brazil}

\par
\textsuperscript{2}{\footnotesize Instituto Nacional de Matem\'{a}tica Pura e Aplicada, Rio de Janeiro, RJ, Brazil}

\par \bigskip
\end{center}

\vspace*{10mm}

\noindent
\textbf{Abstract.} We investigate associative quotients of vertex algebras. We also give a short construction of the Zhu algebra, and a proof of its associativity using elliptic functions.

\vspace*{10mm}

\section{Introduction}\label{section.intro}

\begin{nolabel}
Let $V$ be a vertex algebra. In \cite{Zhu-thesis} and \cite{Z96} Y. Zhu introduced two associative algebras $\zhu(V)$ and $R_V$ constructed as quotients of $V$ equipped with products defined explicitly in terms of its $n^{\text{th}}$ products. The purpose of this note is to present a brief and uniform construction of $\zhu(V)$ and $R_V$ as well as a short proof of the associativity of $\zhu(V)$ utilising elliptic functions.

The representation theory of $\zhu(V)$ is closely connected with that of $V$ (for $V$ a conformal vertex algebra now). There are adjoint functors between the categories of $\zhu(V)$-modules and of positive energy $V$-modules, and these functors induce bijections between the sets of isomorphism classes of simple objects in each category. The existence of an associative algebra with these properties is not in itself surprising since the category of all $V$-modules is equivalent to the category of smooth modules of its enveloping algebra $U(V) = \bigoplus_{n \in \Z} U(V)_n$, and so the algebra $\widetilde{U} = UU(V)_0 / (U(V) \cdot U(V)_{>0})_0$ does the job. And indeed $\zhu(V) \cong \widetilde{U}$. What is nontrivial is the presentation of $\zhu(V)$ as an explicit quotient of $V$, which lends itself to practical applications.

The algebra $\zhu(V)$ was introduced by Zhu in the context of his study of the behaviour of conformal blocks for singular families of elliptic curves with nodal singular fibre. Our use of elliptic functions to prove associativity of $\zhu(V)$ lies closer, in fact, to this point of view on the latter than to its representation theoretic interpretation.
 
\end{nolabel} 

\begin{nolabel}
Let $V$ be a vertex algebra and $f \in \mathbb{C}\left( (t) \right)$ a Laurent series. We have a bilinear operation $V \otimes V \rightarrow V$, \[ a \otimes b \mapsto a_{(f)}b := \res_t f(t)a(t)b \]
which we refer to as the $(f)$-product. We study quotients of the form $Z_f(V) := V/\left(V_{(f)}(T V ) \right)$ equipped with the algebraic structure of $(f)$-product. Here $V_{(f)}(T V )$ is the  ideal in $V$ generated by $TV$ under the $(f)$-product. It turns out that associativity of such quotients (when well defined) imposes very restrictive conditions on $f$. Let $f(t) = \zeta(t, \omega_1, \omega_2)$ be Weierstrass' zeta function, depending on the two periods $\omega_{1},\omega_2 \in \mathbb{C}$. This function does not quite satisfy the conditions guaranteeing associativity of $Z_f(V)$, but its limit $\omega_1 \rightarrow \infty$ does satisfy these conditions, and one recovers Zhu's algebra $\zhu(V)$ (or rather an isomorphic algebra found by Huang \cite{Huang.duality} ). On the other hand the limit $\omega_2 \mapsto \infty$ recovers Zhu's $C_2$-algebra $R_V$.
\label{no:limits}
\end{nolabel}



\section{Vertex Algebras}\label{section.notation}

For the basics of formal calculus and the definition of vertex algebra we refer to Kac's book \cite{Kac.VA.Book}. The formal delta function is defined to be
\[
\delta(z, w) = \sum_{n \in \Z} z^{-n-1} w^n \in \C[[z^{\pm 1}, w^{\pm 1}]].
\]
The ring morphism
\[
i_{z,w} : \C[[z, w]][z^{-1}, w^{-1}, (z-w)^{-1}] \rightarrow \C((z))((w))
\]
is defined by Taylor expansion of $(z-w)^n$ in positive powers of $w$. Note that the image of $i_{w,z}$ is $\C((w))((z))$ which is distinct from $\C((z))((w))$. We denote differentiation by $\partial$, appending the variable as a subscript when necessary to avoid confusion. We abbreviate $\tfrac{1}{j!}\partial^j$ as $\partial^{(j)}$. The formal residue of $f(z) = \sum_{n\in \Z} f_n z^n$ is defined to be
\[
\res_z f(z) dz = f_{-1}.
\]
The relations 
\begin{align}\label{totalderivative}
\partial_w^{(j)}\delta(z,w) = (-1)^j \partial_z^{(j)}\delta(z,w) \quad \text{and} \quad \res_z\left( f(z)\partial_z^{(j)}\delta(z, w) \right) dz = (-1)^j \partial_w^{(j)}f(w).
\end{align}
are easily verified.

A vertex algebra is a vector space $V$ together with a linear map
\[
V \otimes V \rightarrow V((z)), \quad \text{written} \quad a \otimes b \mapsto a(z)b = \sum_{n\in \Z} a(n)b z^{-n-1},
\]
as well as a vacuum vector $\vac$ and translation operator $T \in \en(V)$, which satisfy the Borcherds identity
\begin{align}\label{borfor}
[a(x)b](w) = \res_z\left(a(z)b(w)i_{z,w} - b(w)a(z)i_{w,z}\right)\delta(x,z-w)dz,
\end{align}
the vacuum axiom
\[
\vac(z) = 1_V \quad \text{and} \quad a(z)\vac \in a + zV[[z]],
\]
and the translation invariance axiom
\[
[Ta](z) = \partial_z a(z).
\]
Some useful consequences of the axioms are the commutator formula
\begin{align}\label{commfor}
[a(z),b(w)] = \sum_{j \in \Z_+} [a(j)b](w)\partial_w^{(j)}\delta(z,w),
\end{align}
and the skew-symmetry formula
\begin{align}\label{sksym}
a(z)b = e^{zT}b(-z)a.
\end{align}

\section{Associative Quotients of Vertex Algebras}\label{section.quotients}

Let $V$ be a vertex algebra, and let $f$ be a Laurent series. For any $a, b \in V$ we denote by $a_{(f)}b$ the vector $\res_z f(z)a(z)b \,dz \in V$. For example $a_{(1)}b = a(0)b$ and $(Ta)_{(f)}b = -a_{(\partial f)}b$. We call the bilinear operation
\[
V \otimes V \rightarrow V, \quad a \otimes b \mapsto a_{(f)}b
\]
the $f$-product. Sometimes we abuse this notation, writing for example $a_{(f(-z))}b$ to designate the product $a_{(g)}b$ where $g(z) = f(-z)$.

In this section we study $f$-products and subspaces of $V$ of the form
\[
V_{(f)}V = \left< a_{(f)}b | a, b \in V \right>,
\]
where here, and throughout the paper, $\left<\cdots \right>$ denotes $\C$-linear span. In particular, we should like to determine natural conditions on Laurent series $f, g$ under which $V_{(g)}V$ becomes an ideal for the $f$-product.

Direct computation with the commutator formula (\ref{commfor}) yields
\begin{align}\label{idealstart}
\begin{split}
a_{(f)}(b_{(g)}c)
&= \res_z \res_w f(z) g(w) a(z)b(w) c \,dw \,dz \\
&= \res_z \res_w f(z) g(w) b(w)a(z) c \,dw \,dz + \res_z \res_w f(z) g(w) [a(z),b(w)] c \,dw \,dz \\
&= b_{(g)}(a_{(f)}c) + \sum_{j \in \Z_+} \res_z \res_w f(z) g(w) [a{(j)}b](w) c \partial_w^{(j)}\delta(z,w) \,dw \,dz.
\end{split}
\end{align}
Using (\ref{totalderivative}) the $j^{\text{th}}$ summand here simplifies to
\begin{align*}
(-1)^j \res_z \res_w f(z) g(w) [a{(j)}b](w) c \partial_z^{(j)}\delta(z,w) \,dw \,dz
&= \res_z \res_w (\partial_z^{(j)}f(z)) g(w) [a{(j)}b](w) c \delta(z,w) \,dw \,dz \\
&= \res_w (\partial_w^{(j)}f(w)) g(w) [a{(j)}b](w) c \,dw \,dz.
\end{align*}

The only universal way (that is, by placing conditions on $f$ and $g$ but not on $V$) to guarantee $a_{(f)}(b_{(g)}c) \in V_{(g)}V$ is to require
\begin{align}\label{leftideal.condition}
g(z) \partial_z^{(j)}f(z) \in \left<\partial_z^{(k)} g(z) | k \in \Z_+ \right> \quad \text{for all $j \in \Z_+$}.
\end{align}


Similarly, direct computation with the Borcherds identity (\ref{borfor}) yields
\begin{align}\label{assocstart}
  \begin{split}
(a_{(f)}b)_{(f)}c
&= \res_x \res_w f(x) f(w) [a(x)b](w) c \,dw \,dx \\
&= \res_x \res_w \res_z f(x) f(w) \left(a(z)b(w)c i_{z,w} - b(w)a(z)c i_{w,z}\right)\delta(x,z-w) \,dz \,dw \,dx \\
&= \res_w \res_z f(w) \left(a(z)b(w)c i_{z,w} - b(w)a(z)c i_{w,z}\right) f(z-w) \,dz \,dw.
\end{split}
\end{align}
Substituting the expansions
\[
i_{z,w}f(z-w) = \sum_{j \in \Z_+} (-w)^j f^{(j)}(z) \quad \text{and} \quad i_{w,z}f(z-w) = \sum_{j \in \Z_+} z^j f^{(j)}(-w)
\]
into the final line of (\ref{assocstart}) reduces the two summands there to
\begin{align*}
a_{(f)}(b_{(f)}c) + \sum_{j \in \Z_{\geq 1}} (-1)^j a_{(\partial^{(j)}f)}(b_{(z^j f(z))}c) \quad \text{and} \quad -\sum_{j \in \Z_+} b_{(\partial^{(j)}f(-z) f(z))}(a(j)c),
\end{align*}
respectively.



The relation
\[
(a_{(f)}b)_{(f)}c - a_{(f)}(b_{(f)}c) \in V_{(g)}V
\]
is thus assured by the conditions
\begin{align}\label{associativity.condition1}
\partial_z^{(j)}f(z) &\in \left<\partial_z^{(k)} g(z) | k \in \Z_+ \right> \quad \text{for all $j \in \Z_{\geq 1}$}, \\
\label{associativity.condition2} \text{and} \quad 
f(z) \partial_z^{(j)}f(-z) &\in \left<\partial_z^{(k)} g(z) | k \in \Z_+ \right> \quad \text{for all $j \in \Z_+$}.
\end{align}


By repeating the last calculation with $g(x)$ in place of $f(x)$ we see that the relation $(a_{(g)}b)_{(f)}c \in V_{(g)}V$ is assured by
\begin{align}\label{rightideal.condition}
f(z) \partial_z^{(j)}g(-z) &\in \left<\partial_z^{(k)} g(z) | k \in \Z_+ \right> \quad \text{for all $j \in \Z_+$}.
\end{align}

If the $z^{-1}$ coefficient of $g(z) = \sum g_n z^n$ is nonzero then $V_{(g)}V = V$. So we exclude this case as trivial. On the other hand we should like $\vac$ to be a unit of $V / V_{(g)}V$. But $\vac_{(f)}a = f_{-1} a$, so we are forced to require
\begin{align}\label{unit.condition}
f_{-1} = 1.
\end{align}
It follows from this and (\ref{leftideal.condition}) that $g(z)$ has nonzero singular part. 

If we impose the condition
\begin{align}\label{non.higher.condition}
f(z) \in z^{-1} + \C[[z]]
\end{align}
then (\ref{associativity.condition1}) forces $g = \partial f$ (after a possible rescaling). Hence by (\ref{associativity.condition2}) we have
\begin{align}\label{f.ODE}
f(-z)f(z) = \partial_z f(z).
\end{align}
Observe that $f(-z)f(z)$ is an even function of $z$, hence $\partial_z f(z)$ is too, and so $f(z)$ takes the form $c/2 + F(z)$ for some constant $c$ and odd function $F(z)$. Then (\ref{f.ODE}) becomes
\[
\partial_z F(z) = c^2/4 - F(z)^2.
\]
Integrating this equation, and reimposing (\ref{unit.condition}), yields the two solutions
\begin{align}
f(z) &= z^{-1}, \label{C2.solution} \\
\text{or} \quad f(z) &= c \frac{e^{cz}}{e^{cz}-1} \quad \quad \text{for some $c \neq 0$}. \label{Zhu.solution}
\end{align}


\begin{prop}\label{zhu.is.assoc}
For $f$ either solution \textup{(}\ref{C2.solution}\textup{)} or \textup{(}\ref{Zhu.solution}\textup{)} and $g = \partial f$, the quotient $V / V_{(g)}V$ is an associative algebra with respect to the $f$-product, and $\vac$ is a unit.
\end{prop}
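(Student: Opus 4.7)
\emph{Proof Proposal.} The plan is to reduce the proposition to the verification of the four conditions isolated in the preceding discussion --- the left ideal condition (\ref{leftideal.condition}), the associativity conditions (\ref{associativity.condition1}) and (\ref{associativity.condition2}), and the right ideal condition (\ref{rightideal.condition}) --- together with the two unit identities $\vac_{(f)}a = a$ and $a_{(f)}\vac = a$. Once these hold, the $f$-product will descend to a well-defined associative product on $V/V_{(g)}V$ for which $\vac$ is a two-sided unit.

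For $f = z^{-1}$ and $g = -z^{-2}$ the argument is essentially bookkeeping: I would observe that $\langle \partial^{(k)} g : k \geq 0\rangle = \langle z^{-m} : m \geq 2\rangle$, so each of the four conditions becomes an elementary monomial check (the relevant expression having a pole of order at least two), while the unit identities follow from the vacuum axiom and $\vac(z) = 1_V$.

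The substantive case is (\ref{Zhu.solution}). My key observation is that the ODE $\partial f = f(-z) f(z)$, combined with $f \in z^{-1} + \C[[z]]$, is equivalent to $f(z) + f(-z) = c$ and hence to the polynomial identity $\partial f = cf - f^2$. Iterating this, I would show by induction on $m$ that each $\partial^{(m)}f$ with $m \geq 1$ equals a polynomial $P_m(f)$ of degree $m+1$ with $P_m(0) = 0$, with nonzero leading coefficient, and with its $f$-coefficient equal to $c^m/m! \neq 0$. Using $c \neq 0$ and triangularity of leading $f$-degrees, this identifies the span $\langle \partial^{(k)} g : k \geq 0\rangle = \langle \partial^{(m)} f : m \geq 1\rangle$ with the ideal $f \cdot \C[f]$. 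Differentiating $f(-z) = c - f(z)$ further shows that $g$ is an even function of $z$ and that $\partial_z^{(j)}f(-z) = -\partial^{(j)}f(z)$ for $j \geq 1$; each of the four sufficient conditions then reduces transparently to the statement that a polynomial in $f$ with no constant term lies in $f \cdot \C[f]$. The unit identities follow from $f_{-1} = 1$, $f_n = 0$ for $n \leq -2$, and the creation property $a(z)\vac \in a + zV[[z]]$. The main obstacle is the polynomial-algebra identification of the span as $f \cdot \C[f]$ via the ODE; once that is in hand, each of the four sufficient conditions collapses to a one-line calculation.
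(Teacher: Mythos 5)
There is a genuine gap in the key step of the (\ref{Zhu.solution}) case: the identification of $W := \left<\partial^{(k)}g \mid k \in \Z_+\right> = \left<\partial^{(m)}f \mid m \geq 1\right>$ with the ideal $f\,\C[f]$ is false, and in fact it is contradicted by your own triangularity observation. Since each $\partial^{(m)}f$ ($m \geq 1$) is a polynomial in $f$ of exact degree $m+1 \geq 2$, every nonzero element of $W$ has $f$-degree at least $2$; in particular $f \notin W$, so $W$ is a \emph{codimension-one} subspace of $f\,\C[f]$. The correct identification is $W = g\,\C[f] = f(f-c)\,\C[f]$, i.e.\ the polynomials in $f$ vanishing at both $f=0$ and $f=c$: indeed $\partial\bigl(P(f)\bigr) = P'(f)\,g$ shows $\partial\bigl(\C[f]\bigr) = g\,\C[f]$, hence $W \subseteq g\,\C[f]$, and comparing dimensions degree by degree (both spaces are spanned by elements of leading degrees $2,3,4,\dots$) gives equality. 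One can check this concretely: with $c=1$, $\partial^2 f = f - 3f^2 + 2f^3$ vanishes at $f=1$. This matters because with your (incorrect) identification the four conditions would be vacuous, whereas the whole point of the ODE (\ref{f.ODE}) is that they are genuinely restrictive; the membership statements you need are not ``no constant term'' but ``divisible by $f(f-c)$''.

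The good news is that the corrected identification still makes your argument go through, and cleanly: $W = f(f-c)\,\C[f]$ is an ideal of $\C[f]$, so $f\cdot W \subseteq W$ and $g \cdot W \subseteq W$; condition (\ref{associativity.condition1}) holds by definition of $W$; the $j=0$ case of (\ref{associativity.condition2}) is exactly $f(z)f(-z) = f(c-f) = \partial f \in W$; and the remaining cases of (\ref{leftideal.condition}), (\ref{associativity.condition2}), (\ref{rightideal.condition}) all reduce, via $f(-z) = c - f(z)$ and the evenness of $g$, to multiplying an element of $W$ by $f$ or $g$. Once repaired, this is a genuinely different route from the paper's: the paper defers the closure statement to Lemma \ref{zhu.closure}, which is proved by realizing $g$ as the $q\to 0$ limit of the Weierstrass function $\ov\wp$ and exploiting the fact that the ring of elliptic functions is spanned by $1$ and the derivatives of $\ov\wp$. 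Your purely algebraic argument via the polynomial ODE $\partial f = cf - f^2$ avoids elliptic functions entirely and is more elementary, at the cost of losing the elliptic-curve interpretation (and the by-products, such as the Eisenstein series identities) that the paper's method is designed to exhibit.
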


\begin{proof}
Conditions (\ref{unit.condition}) and (\ref{associativity.condition1}) are satisfied already, as is (\ref{associativity.condition2}) for $j=0$. Condition (\ref{rightideal.condition}) follows from (\ref{associativity.condition2}) since $g = \partial f$. It remains to verify that (\ref{leftideal.condition}) and the remaining parts of (\ref{associativity.condition2}) hold. In other words it remains to show that
  \[
\left<\partial^{(k)} g | k \in \Z_+ \right>
\]
is closed under multiplication by $g$ and by $f$. This follows from Lemma \ref{zhu.closure} of the next section. The algebras are unital because of condition (\ref{non.higher.condition}).
\end{proof}

We denote by $\zhu_c(V)$ the algebra associated with (\ref{Zhu.solution}). It was shown by Huang {\cite[Proposition 6.3]{Huang.duality}} that $\zhu_{2\pi i}(V)$ is isomorphic to Zhu's algebra $A(V)$ as defined in \cite{Z96}. In fact the $\zhu_c(V)$ are isomorphic \cite{DK06} for all $c \neq 0$. The algebra associated with (\ref{C2.solution}) is Zhu's $C_2$-algebra, denoted $R_V$. The bracket $\{a,b\} = a(0)b$ is well defined on $R_V$ and makes it into a Poisson algebra.


Before proceeding we record a useful computation. Since $a_{(g)}\vac = a_{(-2)}\vac = -Ta$ we have $TV \subset V_{(g)}V$. The skew-symmetry formula (\ref{sksym}) implies in general that
\begin{align}\label{generalcomm}
  \begin{split}
  a_{(f)}b = \res_z f(z) e^{zT}b(-z)a \,dz
  = -b_{(f(-z))}a \mod{TV}.
  \end{split}
\end{align}
For the function (\ref{Zhu.solution}), which satisfies $f(-z) = -f(z) + c$, we obtain
\begin{align}\label{fcomm}
a_{(f)}b - b_{(f)}a = c \,\, a(0)b \mod{V_{(g)}V}.
\end{align}

Clearly the $c \rightarrow 0$ limit of (\ref{Zhu.solution}), i.e.,
\[
f(z) = z^{-1} + \frac{c}{2} + \frac{c^2}{12} z + \cdots
\]
recovers (\ref{C2.solution}).

\begin{rem}\label{higher.zhu.remark}
By relaxing condition (\ref{non.higher.condition}) one may obtain many additional solutions $(f, g)$ to the remaining relations, hence many additional associative, but no longer unital, quotients of $V$. If $g$ is taken to be $\partial f$ then replacement of the ideal $V_{(g)}V$ with $TV + V_{(g)}V$ permits one to dispense with condition (\ref{non.higher.condition}) while maintaining unitality of the quotient. This follows from the obvious inclusions
\[
(TV)_{(f)}V \subset V_{(\partial f)}V, \quad \text{and} \quad V_{(f)}(TV) \subset TV + (TV)_{(f)}V.
\]
In this way further associative quotients of $V$ are obtained, such as
\begin{align}\label{higher.zhu.half}
V/(TV + V_{(\partial f_{1/2})}V), \quad \text{equipped with the $f$-product, where} \quad f_{1/2}(z) = \frac{e^{2z}}{(e^z-1)^2}.
\end{align}
The higher Zhu algebras of Dong, Li and Mason \cite{DLM.VA.and.assoc} are obtained in a very similar way. For instance $\zhu_1(V)$ may be presented as $V / (TV + V_{(g)}V)$ equipped with the $f$-product, where
\[
g(z) = \frac{e^{2z}}{(e^z-1)^4}, \quad \text{and} \quad f(z) = \frac{e^{3z}-3e^{2z}}{(e^z-1)^3}.
\]
\end{rem}

\section{Elliptic Functions}\label{section.elliptic}

In this section we recall some facts about elliptic function following the standard references {\cite{Apostol.Book}} and {\cite{Lang.Book.elliptic}}. We then complete the proof of Proposition \ref{zhu.is.assoc}.


Let $\HH$ denote the upper half complex plane. For some $\tau \in \HH$ let $\La = \La_\tau$ denote the lattice $\Z + \Z\tau \subset \C$. Then the Weierstrass elliptic function is defined to be
\begin{align*}
\wp(z,\tau) = \frac{1}{z^2} + \sum_{\la \in \La \backslash 0} \left(\frac{1}{(z-\la)^2} - \frac{1}{\la^2}\right).
  \end{align*}
It is $\La$-periodic and meromorphic on $\C$ with poles on $\La$. The ring of elliptic functions, i.e., of $\La$-periodic mermorphic functions on $\C$ with poles on $\La$, is spanned over $\C$ by $\ov\wp(z,\tau)$ and its derivatives in $z$ together with the constant function $1$. The Laurent series expansion of $\wp$ at $z=0$ takes the form
\begin{align}\label{wp.laurent}
\wp(z,\tau) = z^{-2} + \sum_{k=2}^\infty (2k-1) G_{2k}(\tau) z^{2k-2},
\end{align}
where the Eisenstein series $G_{2k}(\tau)$ is defined by
\begin{align}\label{eisenstein.fourier}
G_{2k}(\tau) = \sum_{n \in \Z \backslash 0} \frac{1}{n^{2k}} + \sum_{m \in \Z \backslash 0} \sum_{n \in \Z} \frac{1}{(m\tau+n)^{2k}}
\end{align}
for all $k \geq 1$. The well known Fourier expansion of $G_{2k}$ in powers of $q = e^{2\pi i \tau}$ is given by
\[
G_{2k}(\tau) = (-1)^{k+1}(2\pi)^{2k}\frac{B_{2k }}{(2k)!}E_{2k}, \quad \text{where} \quad E_{2k}=1- \frac{4k}{B_{2k }} \sum_{n=1}^\infty \frac{n^{2k-1} q^n}{1-q^n},
\]
and where the Bernoulli numbers $B_n$ are defined by $\sum_{n=0}^\infty B_n / n! = x / (e^x-1)$.

It will be more convenient for us to work with $\ov\wp(z,\tau)=\wp(z,\tau)+G_2(\tau)$, which can be expressed as
\begin{align}\label{wp.Lambert}
  \ov\wp(z,\tau) = (2\pi i)^2 \frac{e^{2\pi i z}}{(e^{2\pi i z}-1)^2} + (2\pi i)^2 \sum_{n=1}^\infty \frac{n q^n}{1-q^n} \left[e^{2\pi i n z} + e^{-2\pi i n z}\right],
\end{align}
on the domain of convergence $|\Imag(z)| < \Imag(\tau)$ of the right hand side.

The Weierstrass quasiperiodic function is defined to be
\begin{align*}
  \zeta(z,\tau) = \frac{1}{z} + \sum_{\la \in \La \backslash 0} \left( \frac{1}{z-\la} + \frac{z+\la}{\la^2} \right).
\end{align*}
Its Laurent series expansion is
\begin{align*}
  \zeta(z,\tau) = z^{-1} - \sum_{k=2}^\infty G_{2k}(\tau) z^{2k-1}.
\end{align*}
It will be more convenient for us to work with $\ov\zeta(z,\tau) = \zeta(z,\tau) + \pi i - G_2(\tau)z$, which satisfies
\begin{align}\label{zeta.Lambert}
\ov\zeta(z,\tau) = 2\pi i \frac{e^{2\pi i z}}{e^{2\pi i z}-1} - 2\pi i \sum_{n=1}^\infty \frac{q^n}{1-q^n} \left[ e^{2\pi i n z} - e^{-2\pi i n z} \right]
\end{align}
on the domain of convergence $|\Imag(z)| < \Imag(\tau)$ of the right hand side.

One has the relations
\begin{align}\label{zeta.qp}
\ov\zeta(z+1) = \ov\zeta(z) \quad \text{and} \quad \ov\zeta(z+\tau) = \ov\zeta(z) - 2\pi i.
\end{align}

\begin{lemma}\label{q.de}
The function
\begin{align}\label{formula.q.de}
(2\pi i)^2 q \partial_q \ov{\zeta} + \ov{\zeta} \cdot \ov{\wp}
\end{align}
is elliptic.
\end{lemma}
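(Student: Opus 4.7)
\medskip

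\textbf{Proof proposal.} The strategy is to verify directly that the function $h(z,\tau) := (2\pi i)^2 q\partial_q \ov\zeta(z,\tau) + \ov\zeta(z,\tau)\ov\wp(z,\tau)$ is doubly periodic in $z$ with periods $1$ and $\tau$; meromorphy in $z$ with poles only on $\La$ is immediate from the Laurent/Lambert expansions of $\ov\zeta$ and $\ov\wp$ (and from the fact that $\tau$-differentiation of the Lambert series (\ref{zeta.Lambert}) preserves the pole structure).

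Periodicity under $z \mapsto z+1$ is essentially free: from (\ref{zeta.qp}) we have $\ov\zeta(z+1,\tau) = \ov\zeta(z,\tau)$, and $\ov\wp$ is elliptic, so each summand of $h$ is $1$-periodic in $z$. The substantive step is periodicity under $z\mapsto z+\tau$. For this I would differentiate the quasi-periodicity relation $\ov\zeta(z+\tau,\tau) = \ov\zeta(z,\tau) - 2\pi i$ in $\tau$ with $z$ held fixed. Applying the chain rule to the left-hand side and using the identity $\partial_z \ov\zeta = -\ov\wp$ (which follows from $\partial_z \zeta = -\wp$ together with the definitions $\ov\zeta = \zeta + \pi i - G_2 z$ and $\ov\wp = \wp + G_2$) yields
\begin{equation*}
(\partial_\tau \ov\zeta)(z+\tau,\tau) - (\partial_\tau \ov\zeta)(z,\tau) = -(\partial_z \ov\zeta)(z+\tau,\tau) = \ov\wp(z+\tau,\tau) = \ov\wp(z,\tau).
\end{equation*}
Multiplying by $2\pi i$ and using $q\partial_q = \tfrac{1}{2\pi i}\partial_\tau$ gives
\begin{equation*}
(2\pi i)^2\bigl[(q\partial_q \ov\zeta)(z+\tau,\tau) - (q\partial_q \ov\zeta)(z,\tau)\bigr] = 2\pi i\, \ov\wp(z,\tau).
\end{equation*}

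On the other hand, the quasi-periodicity of $\ov\zeta$ and ellipticity of $\ov\wp$ give
\begin{equation*}
\ov\zeta(z+\tau,\tau)\ov\wp(z+\tau,\tau) - \ov\zeta(z,\tau)\ov\wp(z,\tau) = -2\pi i\, \ov\wp(z,\tau).
\end{equation*}
Adding the two displayed equations, the contributions of $\ov\wp$ cancel and one obtains $h(z+\tau,\tau) = h(z,\tau)$, as required. Combined with $1$-periodicity this shows $h$ is elliptic.

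The main (minor) obstacle is the careful application of the chain rule in $\tau$, since the quasi-periodicity relation mixes the $z$ and $\tau$ dependence; once the identity $\partial_z \ov\zeta = -\ov\wp$ is in hand, the cancellation between the anomaly in $q\partial_q \ov\zeta$ and the anomaly in $\ov\zeta \cdot \ov\wp$ is the precise reason the combination (\ref{formula.q.de}) (and not either summand alone) is elliptic.
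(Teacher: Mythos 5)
Your proof is correct and follows essentially the same route as the paper's: differentiate the quasi-periodicity relation $\ov\zeta(z+\tau,\tau)=\ov\zeta(z,\tau)-2\pi i$ in $\tau$, use $\partial_z\ov\zeta=-\ov\wp$ to identify the anomaly of $(2\pi i)^2 q\partial_q\ov\zeta$ as $+2\pi i\,\ov\wp$, and observe that it cancels the anomaly $-2\pi i\,\ov\wp$ of $\ov\zeta\cdot\ov\wp$. Your explicit treatment of the chain rule and of $1$-periodicity merely makes precise steps the paper leaves implicit.
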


\begin{proof}
Let $2\pi i q \partial_q \ov\zeta(z, \tau) = \partial_\tau \ov\zeta(z, \tau)$ be denoted $\dot{\ov\zeta}(z, \tau)$. Applying $\partial_\tau$ to the second part of (\ref{zeta.qp}) yields
\[
\dot{\ov\zeta}(z+\tau,\tau) - \dot{\ov\zeta}(z,\tau) = -\partial_z\ov\zeta(z,\tau) = \ov\wp(z,\tau).
\]
On the other hand we have
\[
\ov\zeta(z+\tau,\tau)\ov\wp(z+\tau,\tau) - \ov\zeta(z,\tau)\ov\wp(z,\tau) = -2\pi i \ov\wp(z,\tau).
\]
From these two equations it follows that the sum (\ref{formula.q.de}) is elliptic.
\end{proof}
By differentiating (\ref{formula.q.de}) with respect to $z$ it follows that the functions
\begin{align}\label{formula.q.de.higher}
(2\pi i)^2 q \partial_q \partial_z^{k}\ov\wp - \ov\zeta \cdot \partial_z^{k+1}\ov\wp
\end{align}
are also elliptic. In \cite{JVEpisa} Ramanujan's differential equations on Eisenstein series were deduced from the $k=1$ case of this observation.

We now introduce the formal Laurent series
\begin{align*}
P(x,q) &= \frac{1+x}{x^2} + \sum_{n=1}^\infty \frac{nq^n}{1-q^n} \left[ (1+x)^{n} + (1+x)^{-n} \right] \\
\text{and} \quad Z(x,q) &= \frac{1+x}{x} - \sum_{n=1}^\infty \frac{q^n}{1-q^n} \left[ (1+x)^{n} - (1+x)^{-n} \right].
\end{align*}
Upon substitution of $x = e^{2\pi i z}-1$, these become convergent series on the domain $|\Imag(z)| < \Imag(\tau)$, and one has
\begin{align}\label{PZ.converge}
\ov\wp(z,\tau) = (2\pi i)^2 P(x,q) \quad \text{and} \quad \ov\zeta(z,\tau) = 2\pi i Z(x,q).
\end{align}

\begin{lemma}\label{zhu.closure}
The $\C[[q]]$-module $\left<\partial_z^{(k)}P | k \in \Z_+ \right>$ is closed under multiplication by $P$ and by $Z$ modulo $q\,\C[[q]]$. Let $f$ and $g$ be as in (\ref{Zhu.solution}). The vector space $\left<\partial_z^{(k)} g | k \in \Z_+ \right>$ is closed under multiplication by $f$ and by $g$.
\end{lemma}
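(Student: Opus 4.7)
The plan is to prove the second sentence directly, and to deduce the first by lifting the result to $\C[[q]]$ using the elliptic-function identities from Section \ref{section.elliptic}. The two steps are linked by the observation that at $q = 0$ the expansions (\ref{PZ.converge}) reduce to $P|_{q=0} = (1+x)/x^2$ and $Z|_{q=0} = (1+x)/x$, which up to factors of $2\pi i$ are $-g/c^2$ and $f/c$ respectively (taking $c = 2\pi i$).

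For the second sentence, I would change variable to $u = 1/(e^{cz} - 1)$, so that $f$ and $g$ become the elements $c(1+u)$ and $-c^2(u + u^2)$ of the polynomial ring $\C[u]$. A direct computation gives the key identity $\partial_z u^n = -cn(u^n + u^{n+1})$. From this it follows by induction that $\langle \partial_z^{(k)} g \mid k \in \Z_+ \rangle$ coincides with the subspace $W := \text{span}\{u^n + u^{n+1} \mid n \geq 1\}$ of $u\C[u]$. I would then characterize $W$ as the kernel of the evaluation map $P \mapsto P(-1)$ restricted to $u\C[u]$. Since $f(-1) = g(-1) = 0$ and evaluation at $u = -1$ is a $\C$-algebra homomorphism on $\C[u]$, any product $f \cdot v$ or $g \cdot v$ with $v \in W$ lies in the kernel and clearly stays in $u\C[u]$, so closure of $V = W$ under multiplication by $f$ and $g$ follows.

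For the first sentence, I would invoke the elliptic decomposition: since $\ov\wp \cdot \partial_z^{(k)}\ov\wp$ is elliptic, it decomposes as $c_0(\tau) + \sum_j c_j(\tau)\partial_z^{(j)}\ov\wp$ with $c_j \in \C[[q]]$. By Lemma \ref{q.de} together with the relations (\ref{formula.q.de.higher}), the non-elliptic quantity $\ov\zeta \cdot \partial_z^{(k)}\ov\wp$ equals a $(2\pi i)^2 q\partial_q$-correction plus an elliptic function, which decomposes similarly. Modulo $q\C[[q]]$ the $q\partial_q$-corrections vanish (they strictly raise the $q$-order), and translating back via (\ref{PZ.converge}) gives the claimed closure up to the constant-in-$z$ terms $c_0(\tau)$.

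The main obstacle is to show that these $c_0(\tau)$ actually lie in $q\C[[q]]$, i.e.\ that no constant-in-$z$ term survives at $q = 0$. This is exactly what the second sentence provides as input: specializing the decomposition to $q = 0$ must reproduce a closure identity in $u\C[u]$, in which (by the analysis above) no constant term appears, so $c_0(\tau)|_{q = 0} = 0$ and hence $c_0 \in q\C[[q]]$, as required.
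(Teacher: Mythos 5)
Your proposal is correct, and for the first sentence of the lemma it follows essentially the paper's route: both arguments decompose the elliptic functions $\ov\wp\,\partial_z^{(k)}\ov\wp$ (and, via Lemma \ref{q.de} and the functions (\ref{formula.q.de.higher}), the products $\ov\zeta\,\partial_z^{(k)}\ov\wp$ up to a $q\partial_q$-correction that dies under reduction mod $q$) over the span of $1$ and the $\partial_z^{(j)}\ov\wp$ with coefficients in $\C[[q]]$, and then kill the constant term at $q=0$ by observing that the $q=0$ specialization of everything else lies in $x^{-1}\C[x^{-1}]$. Where you genuinely diverge is the second sentence: the paper does not prove the closure of $\left<\partial_z^{(k)}g\right>$ under $f$ and $g$ directly --- it only checks directly that $f\,\partial_z^{(j)}g$ and $g\,\partial_z^{(j)}g$ have no constant term, and then extracts the exact closure statement as the $q=0$ image (under $\varepsilon$) of the elliptic decomposition. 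You instead prove it outright by identifying $\left<\partial_z^{(k)}g\right>$ with $\mathrm{span}\{u^n+u^{n+1} \mid n\geq 1\}$, which is the kernel of evaluation at $u=-1$ restricted to $u\C[u]$ (here $u=x^{-1}$), and noting that $f$ and $g$ vanish there. This is a clean, purely algebraic argument that makes the second half of the lemma independent of the elliptic machinery, and it supplies (indeed slightly more than) the no-constant-term input that your treatment of the first sentence requires, so the logical order you propose --- second sentence first, then the mod-$q$ statement --- is sound. One cosmetic correction: $q\partial_q$ does not strictly raise the $q$-order; it annihilates the $q^0$-coefficient, which is all you actually use.
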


\begin{proof}
The substitution $x = e^{cz}-1$ yields $f(z)=c(x^{-1}+1)$, $g(z)=-c^2(x^{-2}+x^{-1})$, and $\partial_z = c(x+1)\partial_x$. From this it is plain to see that $\partial_z^{(j)}g$, $f \partial_z^{(j)}g$ and $g \partial_z^{(j)}g$ lie in $x^{-1}\C[x^{-1}]$ for all $j \in \Z_+$.

Now let $\varepsilon : \C((x))[[q]] \rightarrow \C((x))$ denote the quotient by the ideal generated by $q$. We observe that $\varepsilon(Z) = f / c$ and $\varepsilon(P) = -g / c^2$.

Since $P$ is the Laurent expansion of an elliptic function it follows that, for each $j \in \Z_+$,
\[
P \partial_z^{(j)}P = \al + \sum_{k \in \Z_+} \beta_k \partial_z^{(k)}P,
\]
for some $\al, \beta_k \in \C[[q]]$. But since $\varepsilon(P) = -g / c^2$ and $g \partial_z^{(j)}g \in x^{-1}\C[x^{-1}]$ we see that $\varepsilon(\al) = 0$, hence $\al \in q\,\C[[q]]$. This proves at once the statements about closure with respect to $g$ and $P$.

Since $\varepsilon$ annihilates the image of $q \partial_q$ and the functions (\ref{formula.q.de.higher}) are elliptic, we may repeat the arguments above to deduce the statements about closure with respect to $f$ and $Z$ exactly as we did for $g$ and $P$.
\end{proof}

Aside from implying Proposition \ref{zhu.is.assoc}, the Lemma \ref{zhu.closure} is a source of classical identities among the constant terms of Eisenstein series. The following such identity is an example.
\begin{cor}
For each integer $n \geq 2$ one has
\begin{align*}
\left( 1 - \frac{n(n-1)}{2} \right) \frac{B_{n}}{n!} = \sum_{i=2}^{n-2} (i-1) \frac{B_{i}}{i!} \cdot \frac{B_{n-i}}{(n-i)!}.
\end{align*}

\end{cor}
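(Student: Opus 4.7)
The plan is to obtain the identity by matching Laurent coefficients in the specific closure relation
\[
f(z)\,g(z) \;=\; \tfrac{c}{2}\,g(z) - \tfrac{1}{2}\,\partial g(z),
\]
which is an instance of the closure established by Lemma \ref{zhu.closure}. This particular relation follows immediately from (\ref{f.ODE}): the function $f$ of (\ref{Zhu.solution}) satisfies $f(-z) = c - f(z)$, so (\ref{f.ODE}) rearranges to $f^2 = c f - g$, and differentiating in $z$ (together with $\partial f = g$) yields the displayed formula after dividing by two.

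Next I would substitute the Bernoulli expansions, obtained from $x/(e^x - 1) = \sum_{n \geq 0} B_n x^n/n!$ with $x = cz$ together with the vanishing $B_{2k+1} = 0$ for $k \geq 1$:
\[
f(z) = z^{-1} + \tfrac{c}{2} + \sum_{k \geq 1} \tfrac{B_{2k}}{(2k)!}\, c^{2k}\, z^{2k-1}, \qquad g(z) = -z^{-2} + \sum_{k \geq 1} \tfrac{(2k-1)\, B_{2k}}{(2k)!}\, c^{2k}\, z^{2k-2}.
\]
For even $n \geq 4$, extracting the coefficient of $z^{n-3}$ from the two sides of the closure relation and cancelling the common factor $c^n$ gives
\[
\sum_{\substack{j + k = n/2 \\ j,\, k \geq 0}} \tfrac{(2k-1)\, B_{2j}\, B_{2k}}{(2j)!\,(2k)!} \;=\; -\tfrac{(n-2)(n-1)\, B_n}{2 \cdot n!}.
\]
The right-hand side comes from the $\partial g$ term only, since $g$ is an even function and so contributes nothing to an odd-power coefficient.

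Separating the two boundary terms $j=0$ and $k=0$, which contribute $(n-1) B_n/n!$ and $-B_n/n!$ respectively, and then applying the $j \leftrightarrow k$ symmetry to convert the weight $(2k-1)$ into $(2j-1) = i - 1$ in the remaining interior sum (with $i = 2j$), I obtain
\[
\sum_{\substack{i = 2 \\ i \text{ even}}}^{n-2} (i-1)\, \tfrac{B_i\, B_{n-i}}{i!\,(n-i)!} \;=\; \tfrac{B_n}{n!}\Bigl( -\tfrac{(n-2)(n-1)}{2} - (n-2) \Bigr) \;=\; \Bigl(1 - \tfrac{n(n-1)}{2}\Bigr) \tfrac{B_n}{n!},
\]
using $-(n-2)(n+1)/2 = 1 - n(n-1)/2$.

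Finally, because $B_m$ vanishes for odd $m \geq 3$, the sum is unchanged if extended over all $i \in \{2, \ldots, n-2\}$, giving the identity for every even $n \geq 4$. The cases $n \in \{2, 3\}$ are trivial (empty sum, and the LHS prefactor $(1 - n(n-1)/2)$ times $B_n$ vanishes), and for odd $n \geq 5$ every product $B_i B_{n-i}$ involves a Bernoulli number of odd index $\geq 3$, so both sides vanish. The bulk of the work is routine algebraic bookkeeping: organising the Cauchy product, isolating the two boundary contributions, and performing the symmetrisation $2k-1 \mapsto i-1$; there is no genuine conceptual obstacle once the closure relation $fg = (c/2)g - (1/2)\partial g$ is in hand.
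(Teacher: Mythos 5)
Your proof is correct and follows essentially the route the paper intends: the corollary is presented there as a consequence of the closure relations of Lemma \ref{zhu.closure}, i.e.\ of a linear relation expressing $fg$ in terms of $g$ and its derivatives, whose Laurent coefficients (the Bernoulli expansions of $f$ and $g$) yield exactly this identity. Your derivation of the explicit relation $fg = \tfrac{c}{2}g - \tfrac{1}{2}\partial g$ directly from (\ref{f.ODE}) rather than from the elliptic-function lemma is a welcome, self-contained shortcut, and the coefficient bookkeeping (boundary terms, $j \leftrightarrow k$ symmetrisation, parity cases) is all accurate.
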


It is easy to adapt the proof of Lemma \ref{zhu.closure} to show that the algebra defined in (\ref{higher.zhu.half}), henceforth referred to as $\zhu_{1/2}(V)$, is associative. We observe that $f_{1/2} = f + g$ and $g_{1/2} = f_{1/2}'$. The substitution $x = e^{z}-1$ yields $f_{1/2} = x^{-2} + 2x^{-1} + 1$ and $g_{1/2} = -2(x^{-3}+2x^{-2}+x^{-1})$. Now $\partial^{(j)}g_{1/2}$, $f_{1/2} \partial^{(j)}g_{1/2}$ and $g_{1/2} \partial^{(j)}g_{1/2}$ lie in $x^{-1}\C[x^{-1}]$ for all $j \in \Z_+$. The rest of the argument is identical. In fact this argument applies more generally. Put $F = f + p$ where $p$ is any linear combination of $g$ and its derivatives. Then the quotient $\zhu_F(V) = V/J_F$, where $J_F = V_{(F')}V + TV$, equipped with the $F$-product is a unital associative algebra.

\bibliographystyle{plain}

\end{document}